\documentclass[12pt,a4paper]{amsart}
\usepackage{mathrsfs}

\newtheorem{theo+}              {Theorem}           [section]
\newtheorem{prop+}  [theo+]     {Proposition}
\newtheorem{coro+}  [theo+]     {Corollary}
\newtheorem{lemm+}  [theo+]     {Lemma}
\newtheorem{exam+}  [theo+]     {Example}
\newtheorem{rema+}  [theo+]     {Remark}
\newtheorem{defi+}  [theo+]     {Definition}

\newenvironment{theorem}{\begin{theo+}}{\end{theo+}}

\newenvironment{corollary}{\begin{coro+}}{\end{coro+}}
\newenvironment{lemma}{\begin{lemm+}}{\end{lemm+}}

\usepackage{amsthm}
\theoremstyle{plain} \theoremstyle{remark}

\newtheorem{example}{Example}

\def \r{\mbox{${\mathbb R}$}}
\def\E{/\kern-1.0em \equiv }

\evensidemargin  5mm \oddsidemargin  5mm \textwidth  145mm
\textheight 209mm

\linespread{1.1}

\thispagestyle{empty}

\title{ Biharmonic hypersurfaces in a conformally flat space}
\author{ Liang Tang And Ye-Lin Ou $^{*}$}
\address{ Department of
Mathematics $\&$ Physics,\newline\indent Hunan University of Technology,
\newline\indent Zhuzhou Hunan,\newline\indent P. R. China\newline
\vskip0.1cm\indent Department of Mathematics \newline\indent Guangxi University for Nationalities
\newline\indent Nanning 530006,\newline\indent P. R. China}
\thanks{$^{*}$Research supported by
NSF of Guangxi (P. R. China), 2011GXNSFA018127.}
\begin{document}

\date {04/20/2012} \subjclass{58E20, 53C12, 53C42}  \keywords{Biharmonic
maps, biharmonic hypersurfaces, minimal hypersurfaces, conformally
flat spaces.}
\maketitle
\section*{Abstract}
\begin{quote}
{\footnotesize } Biharmonic hypersurfaces in a generic conformally
flat space are studied in this paper. The equation of such
hypersurfaces is derived and is used to determine the conformally
flat metric  $f^{-2}\delta_{ij}$ on the Euclidean space $\r^{m+1}$
so that  a minimal hypersurface $M^m\longrightarrow (\r^{m+1},
\delta_{ij})$ in a Euclidean space becomes a biharmonic hypersurface
$M^m\longrightarrow (\r^{m+1}, f^{-2}\delta_{ij})$ in the
conformally flat space. Our examples include all biharmonic
hypersurfaces found in \cite{Ou1} and \cite{OT} as special cases.
\end{quote}
\section{Biharmonic maps and submanifolds}
All manifolds, maps, and tensor fields studied in this paper are assumed to be smooth.\\

A  map $\varphi:(M, g)\longrightarrow (N, h)$
between Riemannian manifolds is called a {\em  biharmonic map} if it is a critical point of the
bienergy functional
\begin{equation}\nonumber
E^{2}\left(\varphi,\Omega \right)= \frac{1}{2} {\int}_{\Omega}
\left|\tau(\varphi) \right|^{2}{\rm d}x
\end{equation}
for every compact subset $\Omega$ of $M$, where $\tau(\varphi)={\rm
Trace}_{g}\nabla {\rm d} \varphi$ is the tension field of $\varphi$.
The {\em biharmonic map equation} is the Euler-Lagrange equation of
this functional which can be written as (see \cite{Ji1})
\begin{equation}\label{BTF}
\tau^{2}(\varphi):={\rm
Trace}_{g}(\nabla^{\varphi}\nabla^{\varphi}-\nabla^{\varphi}_{\nabla^{M}})\tau(\varphi)
- {\rm Trace}_{g} R^{N}({\rm d}\varphi, \tau(\varphi)){\rm d}\varphi
=0,
\end{equation}
where $R^{N}$ denotes the curvature operator of $(N, h)$ defined by
$$R^{N}(X,Y)Z= [\nabla^{N}_{X},\nabla^{N}_{Y}]Z-\nabla^{N}_{[X,Y]}Z.$$

A submanifold $M$ of $(N,h)$ is called a {\em biharmonic submanifold}
if its inclusion map ${\bf i}: (M, {\bf i}^*h)\longrightarrow (N,h)$
is a biharmonic isometric immersion. \\

From the well-known fact that a harmonic map is a map between Riemannian manifold whose tension field $\tau(\varphi)={\rm
Trace}_{g}\nabla {\rm d} \varphi$ vanishes identically and that an
isometric immersion is minimal if and only if it is harmonic we have the following relationships:
$$\{{\rm \bf Harmonic\; maps}\}\subset \{{\rm \bf Biharmonic \;maps}\},$$
$$\{{\rm \bf Minmal\; submanifolds}\}\subset \{{\rm \bf Biharmonic \;submanifolds}\}.$$
These relationships justify our using the names {\em proper biharmonic maps}  for those biharmonic maps which are not harmonic and  {\em proper biharmonic submanifolds} for those biharmonic submanifolds which are not minimal.\\

\indent Among the interesting problems in the study of biharmonic submanifolds are the following two conjectures.\\

{\em Chen's conjecture} (see e.g., \cite{Ch1}, \cite{CI}, \cite{HV}, \cite{Di}, \cite{CMO1}, \cite{Ch2}, \cite{NU} and
the references therein): any biharmonic submanifold in
a Euclidean space is minimal.\\

{\em The generalized Chen's conjecture}: any biharmonic submanifold
of $(N, h)$ with ${\rm Riem}^N\leq 0$ is minimal (see e.g.,
 \cite{CMO1},  \cite{BMO1}, \cite{BMO2}, \cite{IIU}, \cite{NU}, \cite{Ch2}, \cite{OT} and the references therein).\\

While Chen's conjecture is still open the generalized Chen's
conjecture has been proved to be false in the authors recent paper
\cite{OT}. The main idea in solving the generalized Chen's
conjecture is to construct a proper bihmarmonic hypersurface in a
$5$-dimensional conformally flat space with negative sectional
curvature. This and the biharmonicity of the product maps (see
\cite{Ou2}) are used to construct many examples of proper biharmonic
submanifolds in a nonpositively curved manifolds.\\

\indent A Riemannian manifold ($M^{m}, g$) is called a conformally
flat space if for any point of $M$ there exists a neighborhood which
is conformally diffeomorphic to an open subset of the Euclidean
space $\r^{m}$. More precisely, $\forall\,p\in M$, there exists a
neighborhood $U$, $p\in\,U\subset\,M$ and a diffeomorphism
$\varphi:(U, g)\longrightarrow \varphi(U)\subset (\r^m, h)$, such
that\, ${\varphi}^{*} h= e^{2\sigma}g$, where $h$ denotes the
standard Euclidean metric on $\r^m$. The following facts are well
known:
\begin{itemize}
\item any two-dimensional Riemannian manifold is conformally flat;
\item $\{{\rm Space\; forms}\}\subset \{{\rm Conformally\; flat \;spaces}\}$;
\item $\mathbb{S}^m\times \r$ and $\mathbb{H}^m\times \r$ are conformally flat spaces.
\end{itemize}

In the context of biharmonic submanifolds, there has been a growing
study on biharmonic submanifolds in space forms (see \cite{Ch1},
\cite{Ch2}, \cite{CI}, \cite{Ji1}, \cite{Ji2}, \cite{CMO1},
\cite{CMO2}, \cite{BMO1}, \cite{CMO2}, \cite{Di}, \cite{HV}, and the
references therein) in recent years. Some interesting examples of
biharmonic hypersurfaces in some special conformally flat spaces and
their applications in solving the generalized Chen' s conjecture
have been obtained in \cite {Ou1} and \cite{OT}, and some
classifications of biharmonic submanifolds in conformally flat
spaces $\mathbb{S}^m\times \r$ and $\mathbb{H}^m\times \r$ have been
given in \cite{OW}, \cite{FOR}, and \cite{FR}.\\

This paper attempts to study biharmonic hypersurfaces in a generic
conformally flat space. We derived the equation for such
hypersurfaces which generalizes the equation for the biharmonic
hypersurface in a space form. As an application, we use the equation
to determine the conformally flat metric  $f^{-2}\delta_{ij}$ on the
Euclidean space $\r^{m+1}$ so that  a minimal hypersurface
$M^m\longrightarrow (\r^{m+1}, \delta_{ij})$ in a Euclidean space
becomes a biharmonic hypersurface $M^m\longrightarrow (\r^{m+1},
f^{-2}\delta_{ij})$ in the conformally flat space. Our examples
include all biharmonic hypersurfaces found previously in \cite{Ou1}
and \cite{OT} as special cases.\\

\section{Biharmonic hypersurfaces in a conformally flat space}

Biharmonic hypersurfaces in a generic Riemannian manifold has been
studied in \cite{Ou1} and one of the main results is the following
\begin{theorem}\label{EQ1}\cite{Ou1}
Let $\varphi:M^{m}\longrightarrow N^{m+1}$ be an isometric immersion
of codimension-one with mean curvature vector $\eta=H\xi$. Then
$\varphi$ is biharmonic if and only if:
\begin{equation}\label{eq1}
\begin{cases}
\Delta H-H |A|^{2}+H{\rm
Ric}^N(\xi,\xi)=0,\\
 2A\,({\rm grad}\,H) +\frac{m}{2} {\rm grad}\, H^2
-2\, H \,({\rm Ric}^N\,(\xi))^{\top}=0,
\end{cases}
\end{equation}
where ${\rm Ric}^N : T_qN\longrightarrow T_qN$ denotes the Ricci
operator of the ambient space defined by $\langle {\rm Ric}^N\, (Z),
W\rangle={\rm Ric}^N (Z, W)$ and  $A$ is the shape operator of the
hypersurface with respect to the unit normal vector $\xi$.
\end{theorem}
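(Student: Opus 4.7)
The strategy is to specialize the general biharmonic map equation (\ref{BTF}) to the case of an isometric immersion of codimension one, and then to separate the resulting vector equation along $\varphi$ into its tangent and normal components with respect to $M\subset N$.

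First I would use the fact that for an isometric immersion $\varphi:M^{m}\longrightarrow N^{m+1}$ the tension field equals $\tau(\varphi)=mH\xi$, and fix a local orthonormal frame $\{e_i\}_{i=1}^{m}$ on $M$ that is geodesic at a chosen point $p$ (so $\nabla^M_{e_i}e_j|_{p}=0$). At $p$, the rough Laplacian in (\ref{BTF}) collapses to $\sum_i\nabla^{\varphi}_{e_i}\nabla^{\varphi}_{e_i}(mH\xi)$. Using the Gauss formula $\nabla^N_XY=\nabla^M_XY+\langle A(X),Y\rangle\xi$ for $X,Y$ tangent, and the Weingarten formula $\nabla^N_X\xi=-A(X)$, one first obtains $\nabla^{\varphi}_{e_i}(H\xi)=(e_iH)\xi-H\,A(e_i)$. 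Iterating these two formulas and summing over $i$ produces two classes of terms: a normal contribution consisting of $\Delta H$ and $H|A|^{2}$, and a tangent contribution built from $A({\rm grad}\,H)$ and $\sum_i(\nabla^{M}_{e_i}A)(e_i)$.

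Next I would handle the curvature term ${\rm Trace}_g\,R^{N}({\rm d}\varphi,\tau(\varphi)){\rm d}\varphi=mH\sum_i R^N(e_i,\xi)e_i$. Splitting $R^N(e_i,\xi)e_i$ into its tangent and normal parts yields, on the normal side, the scalar $mH\,{\rm Ric}^N(\xi,\xi)$, and on the tangent side, $mH\,({\rm Ric}^N(\xi))^{\top}$, directly from the definition of the Ricci operator given in the statement.

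The main obstacle is the correct assembly of the tangent components. To rewrite $\sum_i(\nabla^{M}_{e_i}A)(e_i)$ I would invoke the codimension-one Codazzi equation $(\nabla^{M}_{X}A)Y-(\nabla^{M}_{Y}A)X=(R^{N}(X,Y)\xi)^{\top}$; tracing it and using ${\rm tr}\,A=mH$ identifies this sum with $m\,{\rm grad}\,H$ modulo a Ricci term that combines with the curvature contribution above. After this substitution, the normal component of $\tau^2(\varphi)=0$ becomes $\Delta H-H|A|^{2}+H\,{\rm Ric}^N(\xi,\xi)=0$, and the tangent component, after replacing $2H\,{\rm grad}\,H$ by ${\rm grad}\,H^{2}$, reduces to $2A({\rm grad}\,H)+\tfrac{m}{2}{\rm grad}\,H^{2}-2H({\rm Ric}^N(\xi))^{\top}=0$. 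Together with the orthogonality of the tangent and normal parts, this yields the system (\ref{eq1}).
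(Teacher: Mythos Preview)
The paper does not actually prove this theorem: it is quoted from \cite{Ou1} and used as a starting point for the subsequent results, so there is no proof in the paper to compare against. Your sketch is the standard derivation and is essentially how the result is obtained in \cite{Ou1}: compute $\tau^2(\varphi)$ for $\tau(\varphi)=mH\xi$ using a geodesic frame, the Gauss and Weingarten formulas, split into tangent and normal parts, and use the traced Codazzi equation to convert $\sum_i(\nabla^M_{e_i}A)(e_i)$ into $m\,{\rm grad}\,H$ plus the tangential Ricci term. The only caution is bookkeeping of signs and the factor $m$ coming from $\tau(\varphi)=mH\xi$; once those are tracked consistently with the curvature convention $R^N(X,Y)Z=[\nabla^N_X,\nabla^N_Y]Z-\nabla^N_{[X,Y]}Z$ used in the paper, your outline yields exactly the system (\ref{eq1}).
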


When the ambient space is a conformally flat space we have

\begin{theorem}\label{EQU}
Let $\varphi:(M^m,g)\longrightarrow (C^{m+1}, h=e^{-2\sigma}\bar h)$ be an isometric
immersion of codimension-one with mean curvature vector $\eta=H\xi$,
where $(C^{m+1}, h)$ denotes a conformally flat space.Then $\varphi$
is biharmonic if and only if:
\begin{equation}\label{equ}
\begin{cases}
\Delta^M H-H |A|^{2}+H\{\Delta_h\sigma+(m-1)[\rm Hess_h(\sigma)(\xi,
\xi)-(\xi\sigma)^2+|{\rm grad}_h\sigma|_h^{2}] \}=0,\\
 2A\,({\rm grad_g}\,H) +\frac{m}{2} {\rm grad_g}\, H^2
-2(m-1)\, H \,[{\rm grad}_g(\xi\sigma)-(\xi\sigma){\rm grad}_g
\sigma+ A({\rm grad}_g \sigma)]=0.
\end{cases}
\end{equation}
where $\sigma$\;is the conformal factor of $(C^{m+1}, h)$,\;$A$ is
the shape operator of the hypersurface with respect to the unit
normal vector $\xi$,\;${\rm grad}_h,\; \Delta_h$ and ${\rm grad}_g,\; \Delta^M$ denote the
gradient and the Laplacian of ambient space $C^{m+1}$ and the hypersurface $M$ respectively.
\end{theorem}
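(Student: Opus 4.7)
The plan is to deduce Theorem~\ref{EQU} as a direct specialization of Theorem~\ref{EQ1} to the conformally flat ambient $(C^{m+1},h)$. Since the biharmonic system (\ref{eq1}) depends on the ambient geometry only through the two ingredients $\mathrm{Ric}^N(\xi,\xi)$ and $(\mathrm{Ric}^N(\xi))^\top$, the whole task reduces to expressing these in terms of the conformal factor $\sigma$, the shape operator $A$, and the $h$-unit normal $\xi$, and then substituting back.

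To get the normal component, I would invoke the classical conformal transformation law for the Ricci tensor. Writing $\bar h=e^{2\sigma}h$ and using $\mathrm{Ric}^{\bar h}=0$ (flat background) solves explicitly for $\mathrm{Ric}^h$ in dimension $m+1$:
\begin{equation*}
\mathrm{Ric}^h=(m-1)\mathrm{Hess}_h\sigma-(m-1)\,d\sigma\otimes d\sigma+\bigl[\Delta_h\sigma+(m-1)|\mathrm{grad}_h\sigma|_h^{2}\bigr]h.
\end{equation*}
Evaluating at $(\xi,\xi)$ and using $h(\xi,\xi)=1$ reproduces exactly the bracket in the first line of (\ref{equ}), so the normal component of (\ref{eq1}) becomes the first equation of (\ref{equ}).

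For the tangential piece I pair the same formula with a vector $Y$ tangent to $M$. Since $h(\xi,Y)=0$ the terms proportional to $h$ drop out, leaving $\mathrm{Ric}^h(\xi,Y)=(m-1)\bigl[\mathrm{Hess}_h\sigma(\xi,Y)-(\xi\sigma)(Y\sigma)\bigr]$. The crucial step is to trade the mixed Hessian for intrinsic data on $M$: from $\mathrm{Hess}_h\sigma(\xi,Y)=Y(\xi\sigma)-(\nabla^h_Y\xi)\sigma$ together with the Weingarten formula $\nabla^h_Y\xi=-AY$, one obtains $\mathrm{Hess}_h\sigma(\xi,Y)=Y(\xi\sigma)+(AY)\sigma$. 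Dualising with respect to $h$, whose restriction to $M$ is $g$, and using the symmetry of $A$, yields
\begin{equation*}
(\mathrm{Ric}^h(\xi))^\top=(m-1)\bigl[\mathrm{grad}_g(\xi\sigma)-(\xi\sigma)\mathrm{grad}_g\sigma+A(\mathrm{grad}_g\sigma)\bigr].
\end{equation*}
Plugging this into the tangential component of (\ref{eq1}) produces the second equation of (\ref{equ}).

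The argument carries no deep obstacle; what demands care is bookkeeping. One must track sign conventions in the conformal Ricci formula so that the $m-1$ coefficients land in the right places, remember that $\nabla^h$ denotes the \emph{ambient} Levi-Civita connection so that the Weingarten identity applies without correction, and use the symmetry of the shape operator correctly when raising indices with $h$ to extract the tangential vector $(\mathrm{Ric}^h(\xi))^\top$. Once these conventions are fixed, the two equations of (\ref{equ}) fall out by direct substitution.
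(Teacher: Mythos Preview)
Your proposal is correct and follows essentially the same route as the paper: both specialize Theorem~\ref{EQ1} by computing $\mathrm{Ric}^h(\xi,\xi)$ and $(\mathrm{Ric}^h(\xi))^{\top}$ from the conformal Ricci formula, with the tangential part handled via the Weingarten relation $\nabla^h_Y\xi=-AY$ and the symmetry of $A$. The only cosmetic difference is that the paper carries the computation through a choice of local coordinates and an orthogonal change of orthonormal frames, whereas you work invariantly; the content is the same.
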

\begin{proof}  Let $\nabla, \rm R,  Ric,  grad_h, \Delta_h$ denote the Levi-Civita connection, Riemannian curvature, Ricci curvature, the gradient operator, and the Laplace operator on $(C^{m+1}, h)$ respectively. The same notations with a ``$\bar{\;\;}$"  over head will be used for the counterparts on  $(C^{m+1}, \bar h)$ . It is well known (see e.g., \cite{Wa}) that the relationship between of the
two connections  and the Riemannian curvature are given by
\begin{equation}\label{a}
{\bar \nabla}_{X}Y=\nabla_{X}Y+(X\sigma)Y+(Y\sigma)X-h(X,Y){\rm
grad}_h\sigma,
\end{equation}
\begin{eqnarray}
{\bar {\rm R}}(W,Z, X,Y)&=&e^{2\sigma}\{{\rm R}(W, Z, X, Y)+h(\nabla_{X}{\rm
grad}_h\sigma,Z)h(Y, W)\\\notag&&-h(\nabla_{Y}{\rm
grad}_h\sigma,Z)h(X, W)
+h(X,Z)h(\nabla_{Y}{\rm
grad}_h\sigma, W)\\\notag &&-h(Y,
Z)h(\nabla_{X}{\rm
grad}_h\sigma ,W)+[(Y\sigma)(Z\sigma)\\\notag
&&-h(Y,Z)|{\rm
grad}_h\sigma|^{2}]h(X,W)
-[(X\sigma)(Z\sigma)\\\notag&&-h(X,Z)|{\rm
grad}_h\sigma|^{2}]h(Y,W)
+[(X\sigma)h(Y,Z)\\\notag&&-(Y\sigma)h(X,Z)]h({\rm
grad}_h\sigma,W)\}.
\end{eqnarray}

In local coordinates, we have
\begin{eqnarray}\notag
e^{-2\sigma}{\bar
{\rm R}}_{ij\,kl}&=&{\rm R}_{ij\,kl}+h_{il}\sigma_{jk}-h_{ik}\sigma_{jl}+h_{jk}
\sigma_{il}-h_{jl}\sigma_{ik} +(h_{il}h_{jk}-h_{ik}h_{jl})|{\rm
grad}_h\sigma|^{2},
\end{eqnarray}
where we have used the  notation
$\sigma_{jk}=\nabla_{k}\sigma_j-\sigma_k\sigma_j=\nabla_{k}\nabla_j\sigma-\sigma_k\sigma_j={\rm
Hess}_h(\sigma)(\partial_j, \partial_k)-\sigma_k\sigma_j$ with
${\rm Hess}_h$ denoting the Hessian operator.\\

The relationship between the Ricci curvatures of the two conformally related metrics is given by
\begin{equation}\label{b}
{\overline {\rm Ric}}_{jk}={\rm Ric}_{jk}-(m-1)\sigma_{jk}-h_{jk}[ \Delta_h\sigma
+(m-1)|{\rm
grad}_h\sigma|^{2}].
\end{equation}

It follows that if the metric $\bar h$ is Euclidean (flat), i.e., the metric $h=e^{-2\sigma}\bar h$ is conformally flat, then we obtain the Ricci curvature of a conformally flat space
\begin{eqnarray}\label{1.1}
{\rm Ric}_{jk}&=&(m-1)\sigma_{jk}+h_{jk}[ \Delta\sigma +(m-1)|{\rm
grad}_h\sigma|^{2}]\\\notag &=&(m-1){\rm Hess}_h(\sigma)(\partial_j,
\partial_k)-(m-1)\sigma_k\sigma_j\\\notag&&+h_{jk}[
\Delta_h\sigma +(m-1)|{\rm
grad}_h\sigma|^{2}].
\end{eqnarray}

For a conformally flat space $(C^{m+1}, h)$, we can choose local coordinates \\$(x^{1},  x^2,\cdots, x^{m+1})$ with local frame $\{\partial_{\alpha}=\frac{\partial}{\partial x^{\alpha}}\}_{\alpha=1, \cdots, m+1}$. It follows that $\{\epsilon_{\alpha}=e^{\sigma}\partial_{\alpha}\}_{\alpha=1, \cdots, m+1}$ form an
orthonormal basis on $(C^{m+1}, h=e^{-2\sigma}\bar{h})$.  Let $\{e_{1}, \cdots,
e_{m}, \xi\}$ be an orthonormal frame on $(C^{m+1},  h)$ adapted to the hypersurface $M$ so that $\{e_{i}\}_{i=1, \cdots, m}$ tangent  and $\xi$ normal to $M$.
The relationship between the two orthonormal frames is given by
\begin{eqnarray}\label{matrix}
\begin{cases}
e_i=T_i^{\alpha}\epsilon_{\alpha},\; i=1, 2, \cdots, m.\\
\xi=T_{m+1}^{\alpha}\epsilon_{\alpha},
\end{cases}
\end{eqnarray}
 where $(T_{\alpha}^{\beta})$ is an $(m+1)\times (m+1)$ orthogonal matrix.\\

Using the relation (\ref{matrix}) we can compute the Ricci curvatures
\begin{eqnarray}\label{c}
{\rm Ric}(\xi, \xi)&=& e^{2\sigma}T_{m+1}^{j}T_{m+1}^{k}{\rm Ric}_{jk}\\\notag
&=&\Delta_h\sigma+(m-1)[{\rm Hess}_h(\sigma)(\xi,
\xi)-(\xi\sigma)^2+|{\rm grad}_h\sigma|^{2}],
\end{eqnarray}
 and
\begin{eqnarray}\label{d}
[{\rm Ric}(\xi)]^T&=& \sum_{i=1}^{m}e^{2\sigma}T_{m+1}^{j}T_{i}^{k}{
\rm Ric}_{jk}e_i\\\notag &=&(m-1)[{\rm
grad}_g(\xi\sigma)+\sum_{i=1}^{m} (Ae_i)(\sigma)e_i-\xi(\sigma){\rm
grad}_g \sigma]
\\\notag &=&(m-1)[{\rm grad}_g(\xi\sigma)-(\xi\sigma){\rm
grad}_g\sigma+\sum_{i,j=1}^{m}b(e_i,e_j)e_j(\sigma)e_i]
\\\notag &=&(m-1)[{\rm grad}_g(\xi\sigma)-(\xi\sigma){\rm grad}_g\sigma+A({\rm grad}_g \sigma)].
\end{eqnarray}
\indent Substituting (\ref{c}) and (\ref{d}) into the biharmonic
hypersurface equation (\ref{eq1}) we obtain the theorem.
\end{proof}

\indent As immediate consequences of Theorem \ref{EQU}  we have \\
\begin{corollary}\label{C2}
A constant mean curvature hypersurface in a conformally flat space $C^{m+1}$ is proper biharmonic if and only
if
\begin{equation}\label{C3}
\begin{cases}
 |A|^{2}=\Delta_h\sigma+(m-1)[\rm Hess_h(\sigma)(\xi,
\xi)-(\xi\sigma)^2+|\rm grad_h\sigma|_h^{2}] ,\\
\rm grad_g\xi(\sigma)-\xi(\sigma)\rm grad_g \sigma+ A(\rm
grad_g \sigma)=0.
\end{cases}
\end{equation}
In particular, if $\xi(\sigma)=0,(\ref{C3})$ reduces to
\begin{equation}\label{C4}
\begin{cases}
 |A|^{2}=\Delta_h\sigma+(m-1)[\rm Hess_h(\sigma)(\xi,
\xi)+|\rm grad_h\sigma|_h^{2}] ,\\
A(\rm grad_g \sigma)=0.
\end{cases}
\end{equation}
\end{corollary}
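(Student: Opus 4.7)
The plan is to deduce the corollary directly from Theorem \ref{EQU} by imposing the constant mean curvature hypothesis and using the fact that ``proper biharmonic'' means $H$ is a nonzero constant.

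First, I would observe that if $H$ is constant then $\mathrm{grad}_g H = 0$, $\mathrm{grad}_g H^2 = 0$, and $\Delta^M H = 0$. Substituting these into the first equation of (\ref{equ}) collapses it to
\begin{equation*}
H\bigl\{-|A|^{2}+\Delta_h\sigma+(m-1)[\mathrm{Hess}_h(\sigma)(\xi,\xi)-(\xi\sigma)^2+|\mathrm{grad}_h\sigma|_h^{2}]\bigr\}=0,
\end{equation*}
while the second equation of (\ref{equ}) reduces to
\begin{equation*}
-2(m-1)H\bigl[\mathrm{grad}_g(\xi\sigma)-(\xi\sigma)\mathrm{grad}_g\sigma+A(\mathrm{grad}_g\sigma)\bigr]=0.
\end{equation*}
Because we are looking for \emph{proper} biharmonic hypersurfaces, $H\not\equiv 0$; in fact by the constancy of $H$ we have $H\neq 0$ everywhere, so I can divide both displayed equations by $H$ (and by $-2(m-1)$ in the second, assuming $m\geq 2$, which is the case for hypersurfaces of dimension at least two) to obtain precisely the system (\ref{C3}). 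Conversely, if $H$ is a nonzero constant and the two equations of (\ref{C3}) hold pointwise, multiplying back by the appropriate factors recovers (\ref{equ}), so Theorem \ref{EQU} gives biharmonicity.

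For the special case $\xi(\sigma)=0$, I would simply substitute this identity into (\ref{C3}): the term $(\xi\sigma)^2$ vanishes in the first equation, and the first two terms of the second equation vanish, leaving the system (\ref{C4}).

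There is no real obstacle here; the only minor point to articulate carefully is that ``proper biharmonic'' together with ``constant mean curvature'' forces $H$ to be a nonzero constant on all of $M$, which legitimizes dividing through by $H$ pointwise and therefore the equivalence of the two systems. Everything else is direct substitution into the equation established in Theorem \ref{EQU}.
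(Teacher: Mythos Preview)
Your proposal is correct and is exactly the derivation the paper has in mind: the paper offers no separate proof for this corollary, simply presenting it as an ``immediate consequence'' of Theorem~\ref{EQU}, and your substitution of $H=\text{const}\neq 0$ into (\ref{equ}) followed by division by $H$ (and by $-2(m-1)$) is precisely that immediate deduction.
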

\indent Let $M$ be a totally umbilical hypersurface in $C^{m+1}$,
i.e., all principal normal curvature at any point $p\in M$ are equal
to the same number $\lambda(p)$, it follows that
\begin{eqnarray*}
H=\frac{1}{m}\sum_{i=1}^{m}h(Ae_i,e_i)=\lambda,\\
A({\rm grad_g} H)=A(\sum_{i=1}^{m}(e_i\lambda)e_i)=\frac{1}{2}{\rm
grad_g}
H^2,\\
|A|^2=m\lambda^2=mH^2,\;A({\rm grad_g} \sigma)=\lambda {\rm grad_g}
\sigma=H{\rm grad_g} \sigma.
\end{eqnarray*}
From this we have
\begin{corollary}\label{C5}A totally umbilical hypersurface in
$C^{m+1}$ is biharmonic if and only if its mean curvature function
$H$ is a solution of the following PDEs
\begin{equation}\label{C6}
\begin{cases}
\Delta^M H-mH^{3}+H\{\Delta\sigma+(m-1)[\rm Hess(\sigma)(\xi,
\xi)-(\xi\sigma)^2+|\rm grad\sigma|^{2}] \}=0,\\
 \frac{2+m}{2} {\rm grad_g}\, H^2
-2(m-1)\, H \,[\rm grad_g(\xi\sigma)-(\xi\sigma)\rm grad_g \sigma+ H
\rm grad_g \sigma]=0.
\end{cases}
\end{equation}
if $H=\xi(\sigma),(\ref{C6})$ becomes
\begin{equation}\label{C7}
\begin{cases}
\Delta^M H-mH^{3}+H\{\Delta\sigma+(m-1)[\rm Hess(\sigma)(\xi,
\xi)-H^2+|\rm grad\sigma|^{2}] \}=0,\\
(m-4){\rm grad_g}\, H^2 =0.
\end{cases}
\end{equation}
\end{corollary}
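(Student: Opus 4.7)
The plan is to substitute the four simplifications collected immediately before the statement — $H=\lambda$, $|A|^{2}=mH^{2}$, $A({\rm grad}_g H)=\tfrac{1}{2}{\rm grad}_g H^{2}$, and $A({\rm grad}_g\sigma)=H\,{\rm grad}_g\sigma$ — directly into the system (\ref{equ}) of Theorem \ref{EQU}. For the first (normal) equation, the only affected term is $-H|A|^{2}$, which becomes $-mH^{3}$; the Ricci-type bracket is untouched, producing the first line of (\ref{C6}) verbatim.

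For the second (tangential) equation of (\ref{equ}), I would substitute $2A({\rm grad}_g H)={\rm grad}_g H^{2}$ to collapse the two leading terms into $\tfrac{2+m}{2}{\rm grad}_g H^{2}$, and $A({\rm grad}_g\sigma)=H\,{\rm grad}_g\sigma$ to replace the shape-operator term inside the bracket by $H\,{\rm grad}_g\sigma$. What remains is exactly the second line of (\ref{C6}), giving the first half of the corollary.

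The specialization to $H=\xi(\sigma)$ is then a direct plug-in. In the first equation, one simply replaces $(\xi\sigma)^{2}$ by $H^{2}$ inside the Ricci bracket to reach the first line of (\ref{C7}). In the second equation, noting that ${\rm grad}_g(\xi\sigma)={\rm grad}_g H$, the bracket collapses to ${\rm grad}_g H-(\xi\sigma){\rm grad}_g\sigma+H\,{\rm grad}_g\sigma={\rm grad}_g H$, so the equation becomes $\tfrac{2+m}{2}{\rm grad}_g H^{2}-2(m-1)H\,{\rm grad}_g H=0$. Using $2H\,{\rm grad}_g H={\rm grad}_g H^{2}$ and combining coefficients $\tfrac{2+m}{2}-(m-1)=\tfrac{4-m}{2}$ yields $(m-4){\rm grad}_g H^{2}=0$, which is the announced condition.

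Because every step is algebraic substitution, there is essentially no obstacle; the only point warranting a brief check is the identity $A({\rm grad}_g H)=\tfrac{1}{2}{\rm grad}_g H^{2}$, which relies on ${\rm grad}_g H\in TM$ together with $A$ acting as multiplication by $H$ on $TM$ under the umbilic hypothesis, and on keeping track of signs when combining $\tfrac{2+m}{2}$ and $m-1$ at the last step.
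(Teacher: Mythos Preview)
Your proposal is correct and matches the paper's approach exactly: the paper records the four umbilic identities $H=\lambda$, $A({\rm grad}_g H)=\tfrac{1}{2}{\rm grad}_g H^{2}$, $|A|^{2}=mH^{2}$, and $A({\rm grad}_g\sigma)=H\,{\rm grad}_g\sigma$ immediately before the corollary and then obtains (\ref{C6}) and (\ref{C7}) by the same direct substitution into (\ref{equ}) that you carry out. Your coefficient bookkeeping for the tangential equation, including the final collapse to $(m-4)\,{\rm grad}_g H^{2}=0$, is accurate.
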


In \cite{Ou1}, an interesting foliation of proper biharmonic
hypersurfaces was found in a conformally flat space, and later in
\cite{OT}, counter examples to the generalized Chen's conjecture
were found by using proper biharmonic hypersurfaces in
$5$-dimensional conformally flat spaces. All those proper biharmonic
hypersurfaces were constructed by starting with a hyperplane (a
totally geodesic hypersurface) in a Euclidean space and then
performing a suitable conformal change of the Euclidean metric into
a conformally flat metric which renders the totally geodesic
hypersurface a proper biharmonic hypersurface. Now we study the more
general problem: given a minimal hypersurface in a Euclidean space,
i.e., a minimal isometric immersion, $\phi: M^m\hookrightarrow
(\r^{m+1}, h)$, under what conditions on  $ f$ the hypersurface in
the conformally flat space $\phi: M^m\hookrightarrow (\r^{m+1},
f^{-2}h)$ becomes a proper biharmonic hypersurface?\\

\begin{theorem}\label{MT2}
Let $\phi: M^m\hookrightarrow (\r^{m+1}, h)$ be a minimal
hypersurface with the unit normal vector field $\xi$ in a Euclidean
space, then, the hypersurface $\phi: M^m\hookrightarrow (\r^{m+1},
f^{-2}h)$ in the conformally flat space is a biharmonic hypersurface
if and only if
\begin{equation}\label{ME}
\begin{cases}
f\Delta_g(f(\xi f))-m {\rm grad}_gf(f(\xi f))-f^2(\xi f)|A|_h^2-2m(\xi f)^3\\\;\;\;\;\;\;\;\;\;\;\;\;\;+mf(\xi f) {\rm Hess}_h(f)(\xi, \xi)=0,\\
 2f A({\rm
grad}_g(\xi f))-2(m-1)(\xi f)A({\rm grad}_gf)+(4-m)(\xi f){\rm
grad}_g(\xi f)=0,
\end{cases}
\end{equation}
where $h$ denotes the standard Euclidean metric on $\r^{m+1}$ and
$g$ and $A$ are the metric and the shape operator of the minimal
hypersurface $M^m\longrightarrow (\r^{m+1}, h)$.
\end{theorem}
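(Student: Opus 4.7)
The plan is to apply Theorem \ref{EQU} directly, viewing the conformally flat metric $f^{-2}h$ as $e^{-2\sigma}h$ with conformal factor $\sigma = \ln f$, and then to translate every quantity appearing in \eqref{equ} back into Euclidean data plus the function $f$. First I would establish the dictionary between the Euclidean and conformally flat geometric data of the immersion: the unit normal in $f^{-2}h$ is $\xi' = f\xi$, and a short computation using the Levi--Civita transformation \eqref{a} applied to the Weingarten map gives the shape operator $A'X = fAX + (\xi f)X$. Taking traces, the minimality hypothesis $\mathrm{tr}(A)=0$ produces the new mean curvature $H' = \xi f$ and the squared norm $|A'|^2 = f^2|A|^2 + m(\xi f)^2$; the induced metric on $M$ is $g' = f^{-2}g$.

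Next I would compute the ambient $\sigma$-terms that appear on the right-hand side of \eqref{equ}. Using the standard conformal transformation rules for the Hessian, Laplacian and gradient together with the identity ${\rm Hess}_h(\ln f) = f^{-1}{\rm Hess}_h(f) - f^{-2}\,df\otimes df$, I expect the three $(m-1)$-terms in the bracket of the first equation of \eqref{equ} to telescope, leaving
\[
\Delta_{h'}\sigma + (m-1)\bigl[{\rm Hess}_{h'}(\sigma)(\xi',\xi') - (\xi'\sigma)^{2} + |{\rm grad}_{h'}\sigma|^{2}_{h'}\bigr] = f\Delta_h f - m|{\rm grad}_h f|^{2}_h + (m-1)f\,{\rm Hess}_h(f)(\xi,\xi),
\]
where $h' = f^{-2}h$. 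I would then decompose $\Delta_h f$ and $|{\rm grad}_h f|^2$ into tangential and normal components on $(M,g)$ using an adapted orthonormal frame; here the minimality hypothesis kills the mean-curvature term that would otherwise appear.

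For the first equation of \eqref{ME} I would rewrite $\Delta_{g'}(\xi f)$ via the conformal change formula for the intrinsic Laplacian, $\Delta_{g'}\phi = f^{2}\Delta_g\phi - (m-2)f\langle {\rm grad}_g f,{\rm grad}_g\phi\rangle_g$, and then combine everything. The combination $f\Delta_g(f(\xi f)) - m\,({\rm grad}_g f)(f(\xi f))$ appearing in the target equation is exactly what emerges after expanding the Leibniz products, so the verification reduces to an algebraic identity. For the second equation I would substitute $A'$, $H' = \xi f$, $\xi'\sigma = \xi f$, and ${\rm grad}_{g'} = f^{2}{\rm grad}_g$ into the tangential part of \eqref{equ}: two $f(\xi f)\,{\rm grad}_g f$ contributions, one arising from $A'(f\,{\rm grad}_g f)$ and one from $-(\xi'\sigma){\rm grad}_{g'}\sigma$, should cancel, and the coefficient of ${\rm grad}_g(\xi f)$ collapses as $(m+2) - 2(m-1) = 4-m$; dividing by $f^{2}$ recovers the stated equation.

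The main obstacle is the bookkeeping between the two pairs of metrics---ambient and intrinsic, Euclidean and conformally flat---and tracking precisely where the minimality hypothesis produces the cancellations. Conceptually nothing is subtle, but one must carefully distinguish ${\rm grad}_g$ (the tangential Euclidean gradient) from ${\rm grad}_{g'}$, and likewise for the two ambient gradients and Laplacians, before the simplifications that collapse the right-hand side of Theorem \ref{EQU} into the form \eqref{ME} become visible.
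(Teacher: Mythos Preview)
Your proposal is correct and follows essentially the same route as the paper: start from Theorem~\ref{EQU}, set $\sigma=\ln f$, record the dictionary $\bar\xi=f\xi$, $\bar H=\xi f$, $|A_{\bar h}|^2=f^2|A|_h^2+m(\xi f)^2$, convert all the $\sigma$-quantities (Hessian, Laplacian, gradients) back to $f$-quantities with respect to the Euclidean metrics $h$ and $g$, and substitute. The paper organizes the calculation as a long list of identities (your displayed formula for the Ricci bracket matches their (\ref{GD11})--(\ref{GD12}) combined, and your conformal Laplacian formula for $\Delta_{g'}(\xi f)$ agrees with their (\ref{GD10}) after expanding $f\Delta_g(f\xi f)$ by Leibniz), so the only difference is that you package the bookkeeping a bit more conceptually while they write everything out frame-wise.
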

\begin{proof}

First of all, with the new notations, the equation (\ref{equ}) for biharmonic hypersurface in the  conformally flat space $\phi: M^m\hookrightarrow (\r^{m+1}, \bar h=f^{-2}h)$ becomes
\begin{equation}\label{GD06}
\begin{cases}
\Delta_{\bar g}^M \bar {H}-\bar {H} |A_{\bar h}|_{\bar h}^{2}+\bar {H}\{\Delta_{\bar h}\sigma+(m-1)[\rm Hess_{\bar h}(\sigma)({\bar \xi},
{\bar \xi})-({\bar \xi} \sigma)^2+|{\rm grad}_{\bar h}\sigma|_{\bar h}^{2}] \}=0,\\
 2A\,({\rm grad}_{\bar g}\,\bar {H}) +\frac{m}{2} {\rm grad}_{\bar g}\, \bar {H}^2
-2(m-1)\, \bar {H} \,[{\rm grad}_{\bar g}{\bar \xi}(\sigma)-{\bar \xi}(\sigma){\rm grad}_{\bar g}
\sigma+ A({\rm grad}_{\bar g} \sigma)]=0,
\end{cases}
\end{equation}
where $\sigma=\ln f$.

{\bf Claim:} Let $\phi: M^m\hookrightarrow (N^{m+1}, h)$ be a minimal hypersurface  with the unit normal vector field $\xi$. Then, the
mean curvature function ${\bar H}$ of $M $ as a hypersurface
 in $\phi: M^m\hookrightarrow (N^{m+1}, {\bar h}=f^{-2}h)$ is given by
\begin{equation}\label{GD07}
{\bar H}=\xi f.
\end{equation}
{\bf Proof of the Claim:} Let $\phi: M^m\hookrightarrow (N^{m+1}, h)$ be a hypersurface  with mean curvature function $H$ and the unit normal vector field $\xi$.  Let ${\bar h}=f^{-2}h$ be a conformal change of the metric. Then, a straightforward computation (see e.g., \cite{Ne}) the
mean curvature function $\bar {H}$ of $M $ as a hypersurface
 in $(N^{m+1}, {\bar h})$ is given by
\begin{equation}\label{R1}
{\bar H}=f H+f\xi(\ln f),
\end{equation}
from which we obtain the  Claim.\\

Let $\{e_1, \cdots, e_m, \xi\}$ be an orthonormal frame adapted
to the minimal hypersurface $\phi: M^m\hookrightarrow (\r^{m+1}, h)$ with $\xi$ being the unit normal vector with respect to the metric $h$. Then,
$\{\bar e_i=fe_i, \; i=1, \cdots, m; \bar \xi=f\xi\}$ consist an orthonormal frame adapted
to the hypersurface $\phi: M^m\hookrightarrow (\r^{m+1}, \bar h=f^{-2}h)$ with respect to the metric $\bar h$.\\

\indent Using  the relationship $(\ref{a})$ between the
connections of the two conformally related metrics we have
\begin{eqnarray}
{\bar \nabla}_{\bar e_i} \bar e_j&=&f^2\nabla_{e_i} e_j-e_j(f)\bar
e_i+\delta_{ij}{\rm grad}_{\bar h} \ln f,\\\label{Hre2}
{\bar \nabla}_{\bar e_i} \bar \xi&=&f^2\nabla_{e_i} \xi-\xi(f)\bar
e_i.
\end{eqnarray}
A straightforward computation yields
\begin{eqnarray}\label{GD08}
|A|_{\bar h}^2&=&\Sigma_{i=1}^m\bar h({\bar \nabla}_{\bar e_i}\bar
\xi,{\bar \nabla}_{\bar e_i}\bar \xi)
\\\notag&=&\Sigma_{i=1}^m [f^2h(\nabla_{e_i}
\xi,\nabla_{e_i} \xi)]+2mf\xi(f)H+m(\xi f)^2\\\notag
&=&f^2|A|_h^2+m(\xi f)^2.
\end{eqnarray}

\indent Noting that $\sigma=\ln f$ we have:
\begin{eqnarray}\notag
\bar e_i(\sigma)&=&fe_i(\ln f)=e_i(f), \\\notag
\bar e_i\bar
e_i(\sigma)&=&fe_i(e_i(f))=fe_ie_i(f), \\\notag
\bar \xi(\sigma)&=&\xi(f), \\\notag {\rm grad}_{\bar
h}\sigma&=&\sum_{i=1}^{m+1}\bar e_i(\ln f)\bar e_i =f{\rm grad}_hf, \\\label{GD09}
|{\rm grad}_{\bar h}\sigma|^2_{\bar h}&=&|{\rm grad_h}f|_h^2.\\\notag
 {\bar\nabla}_{\bar e_i}\bar e_i(\sigma)&=&f\nabla_{e_i}
e_i(f)-e^2_i(f)+|{\rm grad}_hf|^2, \\\notag
\bar \xi \bar \xi(\sigma)&=&f \xi \xi(f),\\\notag{\bar \nabla}_{\bar
\xi}\bar
\xi(\sigma)&=&f \nabla_{\xi} \xi(f)-(\xi f)^2+|{\rm grad}_hf|^2,\\\notag
\bar e_i\bar e_i(\bar H)&=&fe_i(fe_i(\bar H))=fe_i(f)e_i(\bar
H)+f^2e_ie_i(\bar H), \\\notag
{\bar \nabla}_{\bar e_i}\bar e_i(\bar H)&=&f^2\nabla_{e_i} e_i(\bar
H)-fe_i(f)e_i(\bar H)+f{\rm grad}_h\bar H(f).
\end{eqnarray}

A further computation gives
\begin{eqnarray}\notag
 {\bar\nabla}^M_{\bar e_i}\bar e_i(\bar H)&=&{\bar \nabla}_{\bar e_i}\bar
e_i(\bar H)-b_{\bar h}(\bar e_i, \bar e_i)\bar \xi(\bar H),\\\label{GD10}
\Delta^M_{\bar h}{\bar H}&=&f\Delta_g(f\xi f)-f(\xi f)\Delta_g{f}-m f({\rm
grad}_g f)(\xi f),\\\label{GD11}
\Delta_{\bar  h}\sigma&=&f\Delta_h f-m|{\rm
grad}_hf|^2,\\\label{GD12}
{\rm Hess}_{\bar h}(\sigma)(\bar \xi,\bar \xi)&=&\bar \xi\bar
\xi(\sigma)-{\bar \nabla}_{\bar \xi}\bar \xi(\sigma)\\\notag
&=&f{\rm Hess}_h(f)(\xi, \xi)+({\bar \xi} \,\sigma)^2-|{\rm grad}_{\bar
h}\sigma|^2,\\\notag
{\rm grad}^M_{\bar h}\bar H&=&\sum_{i=1}^{m}\bar e_i(\xi f)\bar
e_i=f^2{\rm grad}_g(\xi f), \\\label{GD13}
A_{\bar h}({\rm grad}^M_{\bar h}{\bar H})&=&f^3A({\rm grad}_g (\xi f) )+f^2\xi(f){\rm
grad}_g(\xi f),\\\label{GD14}
{\rm grad}^M_{\bar h}\sigma&=&\sum_{i=1}^{m}\bar e_i(\ln f)\bar
e_i=f\sum_{i=1}^{m}e_i(f)e_i=f{\rm grad}_gf,\\\label{GD15}
A_{\bar h}({\rm grad}^M_{\bar h}\sigma)&=&f^2A({\rm grad}_gf)+f\xi(f){\rm grad}_gf.
\end{eqnarray}
 Substituting (\ref{GD07}),  (\ref{GD08}),  (\ref{GD09}),  (\ref{GD10}),  (\ref{GD11}),  (\ref{GD12}),  (\ref{GD13}),  (\ref{GD14}),  (\ref{GD15})  into  (\ref{GD06}) and simplifying the resulting equation we obtain Equation (\ref{ME}) which completes the proof of the theorem.
\end{proof}

As an application of Theorem \ref{MT2}  we have the following theorem which give a generalization of Theorem 3.1 in \cite{Ou1}.
\begin{theorem}\label{f31}
The hyperplane $\varphi : \mathbb{R}^m\rightarrow (\mathbb{R}^{m+1},\bar
h=f^{-2}(x_1,\ldots, x_m,z)(\sum_{i=1}^m{\rm d}{x_i}^{2}+{\rm
d}{z}^{2})$ with $\varphi(x_1,\ldots, x_m)=(x_1,\ldots, x_m,c)$ in the conformally flat space
 is biharmonic  if and only if one of the following three cases happens\\
$1)$ it is minimal i.e., $f_{z}=0$; \\
$2)$ $m=4$ and f is solution of the equation
\begin{eqnarray}\label{f32}
\sum_{i=1}^{4}[f^2f_{iiz}-2ff_if_{iz}+ff_zf_{ii}-4f_zf_i^2]+4f_z(ff_{zz}-2f_z^2)=0,
\end{eqnarray}
$3)$ The hyperplane has nonzero constant mean curvature and $f$
takes the form $f(x_1, \ldots, x_m, z)=p(x_1, \ldots, x_m)+q(z)$
with $p$ and $q$ satisfying the following equation
\begin{eqnarray}\label{35}
(p\sum_{i=1}^{m}p_{ii}-m\sum_{i=1}^{m}p_i^2)+m(qq_{zz}-2q_z^2)+(m
pq_{zz}+q\sum_{i=1}^{m}p_{ii})=0.
\end{eqnarray}
In particular, if $p=\rm constant$, then $f(x_1,\ldots,
x_m,z)=(Az+B)^{-1}$.
\end{theorem}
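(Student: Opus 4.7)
The plan is to specialize Theorem~\ref{MT2} to this totally geodesic setup. The hyperplane $\varphi(x_1,\ldots,x_m) = (x_1,\ldots,x_m,c)$ in $(\mathbb{R}^{m+1},h)$ is totally geodesic, hence minimal, with shape operator $A \equiv 0$, $|A|_h^2 = 0$, induced metric $g$ the flat Euclidean metric on $\mathbb{R}^m$, and unit normal $\xi = \partial_z$. Consequently $\xi f = f_z$ and ${\rm Hess}_h(f)(\xi,\xi) = f_{zz}$, and substituting into (\ref{ME}), the first biharmonic equation collapses, on the hyperplane $\{z=c\}$, to
\[
f\,\Delta_g(f f_z) - m\,({\rm grad}_g f)(f f_z) - 2m f_z^3 + m f f_z f_{zz} = 0,
\]
while the second biharmonic equation collapses to $(4-m)\, f_z\, {\rm grad}_g f_z = 0$, where $\Delta_g = \sum_{i=1}^m \partial_{x_i}^2$ and ${\rm grad}_g = \sum_{i=1}^m f_i \partial_{x_i}$ act tangentially.

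The second equation drives the trichotomy. If $f_z \equiv 0$ on $\{z = c\}$, then by the Claim in the proof of Theorem~\ref{MT2} the mean curvature $\bar H = f_z$ vanishes, so the hypersurface is minimal and automatically biharmonic; this is Case~1. If $m = 4$, the second equation is automatic, and expanding $\Delta_g(f f_z) = \sum_{i=1}^{4}(f_{ii} f_z + 2 f_i f_{iz} + f f_{iiz})$ and $({\rm grad}_g f)(f f_z) = f_z \sum_{i=1}^4 f_i^2 + f \sum_{i=1}^4 f_i f_{iz}$ into the first equation and regrouping yields exactly (\ref{f32}); this is Case~2.

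For the remaining branch, assume $m \neq 4$ and $f_z \not\equiv 0$ on the hyperplane. The second equation then forces ${\rm grad}_g f_z \equiv 0$, so $\bar H = f_z|_{z=c}$ is a nonzero constant. The natural global ansatz realising this identically is the separated form $f(x_1,\ldots,x_m,z) = p(x_1,\ldots,x_m) + q(z)$, for which $f_{iz} = f_{iiz} = 0$. Substituting into the first equation and dividing by $q_z \neq 0$ collects the surviving terms into
\[
\Bigl(p\sum_{i=1}^m p_{ii} - m\sum_{i=1}^m p_i^2\Bigr) + m(qq_{zz}-2q_z^2) + \Bigl(m p q_{zz} + q\sum_{i=1}^m p_{ii}\Bigr) = 0,
\]
which is (\ref{35}), establishing Case~3.

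Finally, when $p$ is constant the $p$-derivative sums in (\ref{35}) vanish, leaving $f f_{zz} = 2 f_z^2$ for $f = p + q$ regarded as a function of $z$ only. Writing $v = f_z$ and viewing $v = v(f)$ yields the separable ODE $f\, dv/df = 2v$, so $v = C f^2$; a second quadrature produces $f = (Az+B)^{-1}$. The only real labour is the careful bookkeeping in expanding the first equation to match the published forms of (\ref{f32}) and (\ref{35}); no new geometric input beyond the $A = 0$ reduction of (\ref{ME}) is required.
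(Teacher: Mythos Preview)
Your proof is correct and follows essentially the same route as the paper: specialize Theorem~\ref{MT2} with $A\equiv 0$, $\xi=\partial_z$, reduce the tangential equation to $(4-m)f_z\,{\rm grad}_g f_z=0$, and branch into the three cases, expanding the normal equation to obtain (\ref{f32}) and (\ref{35}). The only cosmetic differences are that you solve the ODE $ff_{zz}=2f_z^2$ directly (the paper cites Theorem~3.1 of \cite{Ou1}) and that you are more explicit about the separated form $f=p+q$ being an ansatz rather than a forced consequence of $f_{zi}|_{z=c}=0$.
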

\begin{proof}
Noting that the hyperplane in the Euclidean space $\varphi : \mathbb{R}^m\rightarrow (\mathbb{R}^{m+1},
h=\sum_{i=1}^m{\rm d}{x_i}^{2}+{\rm
d}{z}^{2})$  with $\varphi(x_1,\ldots, x_m)=(x_1,\ldots, x_m, c)$ is a totally geodesic hypersurface with
unit normal vector field $\xi=\partial_z$ we have  $\xi f=f_z$, $A=0$, and that the induced metric on the
hyperplane is the standard Euclidean metric $g=\sum_{i=1}^m{\rm d}{x_i}^{2}$. It follows that the second
equation of (\ref{ME}) is equivalent to
\begin{equation}
(m-4)f_zf_{zi}=0,\;\; i=1, 2, \cdots, m.
\end{equation}
Case I: $f_z=0.$ This means the conformally flat metric is actually homothetic to the Euclidean metric
and the hyperplane remains to be totally geodesic.\\
Case II: $f_{zi}=0, \; i=1, 2, \cdots, m$. In this case, the mean curvature of the hypersurface
is given by $H=(\xi f)|_{z=C}=\rm constant$. It follows that  $f$ takes the form $p(x_1, \cdots, x_m)+q(z)$
and the first equation of (\ref{ME}) becomes
\begin{eqnarray}\label{35}
(p\sum_{i=1}^{m}p_{ii}-m\sum_{i=1}^{m}p_i^2)+m(qq_{zz}-2q_z^2)+(m
pq_{zz}+q\sum_{i=1}^{m}p_{ii})=0.
\end{eqnarray}

In particular, if $p=\rm constant$, then $f$ depends on $z$ alone
and by Theorem
3.1 in \cite{Ou1} we conclude that $f(z)=(Az+B)^{-1}$.\\

Case III: $m=4$. In this case, one can easily (use the fact that both $g$ and $h$ are Euclidean metrics)
that the first equation of (\ref{ME}) is equivalent to (\ref{f32}). Summarizing the above results we obtain the theorem.
\end{proof}
\begin{example}
 For any $A,B,C,D>0$, let $M^4=\{(x_1, \cdots, x_4)\in \r^4|\; x_i>0, \;i=1, 2, 3, 4\}$ and $N^5= M\times \r^+$.
 Then, the
isometric immersion $\varphi : M^4\rightarrow
(N^5, h=(Ax_i+B)(Cz+D)^2(\sum_{i=1}^m{\rm
d}{x_i}^{2}+{\rm d}{z}^{2}))$ with
$\varphi(x_1,\cdots,x_4)=(x_1, \cdots, x_4, K)$ and $K>0$ is a proper biharmonic hyper surface
in the conformally flat space.\\

In fact, this can be obtained by looking for a solution of  (\ref{f32}) of the form $f=p(x_i)q(z)$.
In this case, one can easily check that
\begin{eqnarray*}
f_i=p'(x_i)q(z),\;f_z=p(x_i)q'(z),\;f_{ii}=p''(x_i)q(z),\\f_{iz}=p'(x_i)q'(z),\;f_{zz}=p(x_i)q''(z),\;f_{iiz}=p''(x_i)q'(z).
\end{eqnarray*}
Substituting these  into the equation (\ref{f32}) we have
\begin{eqnarray*}
p(x_i)q'(z)q^2(z)[p(x_i)p''(x_i)-3(p'(x_i))^2]+2p^3(x_i)q'(z)[q(z)q''(z)-2(q'(z))^2]=0,
\end{eqnarray*}
\indent By looking for the solutions satisfying
$p(x_i)p''(x_i)-3(p'(x_i))^2=0$ and $q(z)q''(z)-2(q'(z))^2=0$ we
obtain special solutions $p(x_i)=\frac{1}{\sqrt{Ax_i+B}}$ and
$q(z)=\frac{1}{Cz+D}$ with positive constants $A,B,C,D$. From this we
obtain the example.
\end{example}
\indent Recall that the main idea used in constructing a counter
example to the generalized Chen's conjecture (see \cite{OT}) is to
find a conformal change of the Euclidean metric on $\r^{m+1}$ so
that certain hyperplane becomes a proper biharmonic hypersurface and
at the same time the conformally flat metric has nonpositive
sectional curvature. In \cite{Ou1}, the  author starts with a plane
perpendicular to the last coordinate axis and searches for a
conformally flat metric whose conformal factor depends only on the
last coordinate. It turns out that for this type of the metric the
hyperplane does become proper biharmonic but the metric cannot be
nonpositively curved. Later in \cite{OT}, we succeed in finding many
counter examples by using a hyperplane in a more general position
and searching for the same type of the conformally flat metrics. Our
next theorem shows that even we starts with a hyperplane that
perpendicular to the last coordinate axis we can find conformally
flat metrics depending on more variables which then give negative
sectional curvature and turn the hyperplane into a proper biharmonic
hypersurface.\\

Before stating and proving our next theorem we prove the following lemma which has its own interest.
\begin{lemma}\label{37}
Let $(C^{m}, h=f^{-2}(x_1,\cdots, x_m)\sum_{i=1}^{m}{\rm
d}{x_i}^{2})$ be a conformally flat space and let $\{e_i=f\frac{\partial}{\partial
x_i},\;i=1, 2,\cdots, m\}$ denote an orthonormal
frame on $C^m$.  Let $P$ be a plan section at a
point and suppose that $P$ is spanned by an orthonormal basis
$X, Y$. Then, the sectional curvature $K(P)$ of $C^{m}$ is given by
\begin{eqnarray}\label{38}
K(P)=\sum_{i, j=1}^{m}(a_ia_j+b_ib_j)ff_{ij}-\sum_{i=1}^{m}f_i^2.
\end{eqnarray}
where $a_i=h(X,\,e_i),\;b_i=h(Y,\,e_i),\;f_i=\frac{\partial
f}{\partial x_i},\;f_{ij}=\frac{\partial^2 f}{\partial x_i\partial
x_j}$.
\end{lemma}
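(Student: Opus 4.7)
The plan is to exploit the curvature--under--conformal-change identity that already appears in the proof of Theorem \ref{EQU}. Writing $\sigma = \ln f$, the metric takes the form $h = e^{-2\sigma}\bar h$, where $\bar h = \sum_i {\rm d}x_i^2$ is the Euclidean metric; since $\bar h$ is flat we have $\bar R \equiv 0$, and the identity from (\ref{a}) then solves for the Riemann tensor $R_{ijkl}$ of $h$ entirely in terms of $h$, the modified Hessian $\sigma_{jk}$, and $|{\rm grad}_h\sigma|_h^2$. The proof will proceed by first rewriting these conformal data in terms of $f$ itself, then converting to the orthonormal frame $\{e_i = f\partial_i\}$, and finally contracting against $a_i b_j a_k b_l$.

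First I would compute the two conformal data. A short calculation with the Christoffel symbols of $h = f^{-2}\delta$ yields $\sigma_{jk} = f_{jk}/f - \delta_{jk}|\nabla f|^2/f^2$ (where $|\nabla f|^2 := \sum_l f_l^2$) and $|{\rm grad}_h\sigma|_h^2 = |\nabla f|^2$; the latter identity holds because the inverse-metric factor $f^2$ exactly cancels the $f^{-2}$ produced by $(f_i/f)^2$.

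Next I would rescale to the orthonormal frame. Since $R$ is a $(0,4)$-tensor, $R(e_i,e_j,e_k,e_l) = f^4 R_{ijkl}$, and the factors $f^{\pm 2}$ combine cleanly with each coordinate component $h_{il}\sigma_{jk}$ so that the orthonormal-frame curvature becomes
\[
R(e_i,e_j,e_k,e_l) = -\delta_{il}S_{jk} + \delta_{ik}S_{jl} - \delta_{jk}S_{il} + \delta_{jl}S_{ik} - (\delta_{il}\delta_{jk} - \delta_{ik}\delta_{jl})|\nabla f|^2,
\]
where $S_{jk} := f f_{jk} - \delta_{jk}|\nabla f|^2$.

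Finally, with $X = \sum a_i e_i$ and $Y = \sum b_i e_i$ satisfying $\sum a_i^2 = \sum b_i^2 = 1$ and $\sum a_i b_i = 0$, I would evaluate $K(P) = R(X,Y,X,Y)$ by contracting the above expression against $a_i b_j a_k b_l$. The two $S$-terms whose Kronecker contraction produces a factor of $\sum a_i b_i$ vanish, the other two assemble to $\sum S_{ij}(a_i a_j + b_i b_j)$, and the bracketed Kronecker piece contributes $+|\nabla f|^2$; substituting $S_{ij} = f f_{ij} - \delta_{ij}|\nabla f|^2$ gives precisely $\sum (a_i a_j + b_i b_j) f f_{ij} - |\nabla f|^2$, which is the claim. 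The only real obstacle is bookkeeping --- getting the signs of the $\bar R$-versus-$R$ identity right, and checking that the $|\nabla f|^2$ coefficients collapse as $(-2) + 1 = -1$; no deeper geometric input is needed beyond what was already used for Theorem \ref{EQU}.
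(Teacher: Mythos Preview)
Your argument is correct, but it takes a different route from the paper's proof. The paper does not go through the full $(0,4)$ Riemann tensor; instead it invokes a direct sectional-curvature formula for a conformally flat metric,
\[
K(P)=XX(\sigma)-(\nabla_XX)(\sigma)-(X\sigma)^2+YY(\sigma)-(\nabla_YY)(\sigma)-(Y\sigma)^2+|{\rm grad}\,\sigma|^2,
\]
then computes $\nabla_{e_i}e_j$ explicitly in the frame $e_i=f\partial_i$ and evaluates each term by hand. Your approach instead recycles the coordinate curvature identity already recorded in the proof of Theorem~\ref{EQU}, packages it as $R(e_i,e_j,e_k,e_l)=-\delta_{il}S_{jk}+\delta_{ik}S_{jl}-\delta_{jk}S_{il}+\delta_{jl}S_{ik}-(\delta_{il}\delta_{jk}-\delta_{ik}\delta_{jl})|\nabla f|^2$ with $S_{jk}=ff_{jk}-\delta_{jk}|\nabla f|^2$, and contracts against $a_ib_ja_kb_l$ using orthonormality. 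The trade-off: the paper's computation is self-contained for this lemma and avoids tracking the full tensor, while your version makes the lemma essentially a corollary of machinery already in place and keeps the combinatorics of the contraction transparent (the two $S$-terms hitting $\sum a_ib_i$ vanish, the remaining two give $\sum(a_ia_j+b_ib_j)S_{ij}$, and the Kronecker block contributes $+|\nabla f|^2$). Your only fragile point is the sign convention linking $K(P)$ to $R(X,Y,X,Y)$ in the paper's curvature convention; since your final expression matches the statement, the convention is consistent, but it would be worth stating explicitly which contraction you are taking.
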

\begin{proof}
\indent One can easily check  (see, e.g., \cite{Wa} or \cite{OT}) that the sectional curvature $K(P)$ of
$C^{m}$ is given by
\begin{eqnarray}\notag \label{SC}
K(P)&=&h(\nabla_X {\rm grad}\sigma,X)+h(\nabla_Y {\rm
grad}\sigma,Y)+|{\rm grad}\sigma|^2-(X \sigma)^2-(Y \sigma)^2\\\label{SC}
&=&
XX(\sigma)-(\nabla_XX)(\sigma) -(X \sigma)^2\\\notag &&+YY(\sigma)-(\nabla_YY)(\sigma)-(Y \sigma)^2 +|{\rm
grad}\sigma|^2,
\end{eqnarray}
where $\sigma=\ln f$.\;By hypothesis and a straightforward computation we have
\begin{eqnarray}\notag
\sum_{i=1}^{m }a_i^2&=&1,\;\;\;\;\sum_{i=1}^{m}b_i^2=1,\\\notag e_i(\sigma)&=&f_i,\\\notag
\nabla_{e_i}e_j&=&\delta_{ij}\sum_{k=1}^{m}f_ke_k-f_je_i,\\\label{gd50} |{\rm
grad}\sigma|^2&=&\sum_{i=1}^{m}f_i^2.
\end{eqnarray}
A further computation gives
\begin{eqnarray}\notag
X(\sigma)&=&\sum_{i=1}^{m}a_ie_i(\sigma)=\sum_{i=1}^{m}a_if_i,\\\notag
XX(\sigma)&=&\sum_{i,j=1}^{m}a_ie_i(a_j)f_j+\sum_{i,j=1}^{m}a_ia_jff_{ij},\\\notag
\nabla_XX &=&\sum_{i,j=1}^{m}a_ie_i(a_j)e_j+\sum_{i,j=1}^{m}a_ia_j\nabla_{e_i}e_j,\\\notag
(\nabla_XX)(\sigma)&=&\sum_{i,j=1}^{m}a_ie_i(a_j)f_j+\sum_{i,j=1}^{m}a_ia_j(\nabla_{e_i}e_j)(\sigma)\\\notag
&=&\sum_{i,j=1}^{m}a_ie_i(a_j)f_j+\sum_{i=1}^{m}f_i^2-\sum_{i,j=1}^{m}a_ia_jf_if_j,\\\label{gd51}
XX(\sigma)&-&(\nabla_XX)(\sigma)-(X \sigma)^2=\sum_{i,j=1}^{m}a_ia_jff_{ij}-\sum_{i=1}^{m}f_i^2.
\end{eqnarray}
Similarly, we have
\begin{eqnarray}\label{gd52}
YY(\sigma)-(\nabla_YY)(\sigma)-(Y \sigma)^2=\sum_{i,j=1}^{m}b_ib_jff_{ij}-\sum_{i=1}^{m}f_i^2.
\end{eqnarray}
Substituting (\ref{gd50}), (\ref{gd51}), (\ref{gd52}) into
(\ref{SC}) we have
\begin{eqnarray*}
K(P)&=\sum_{i,j=1}^{m}(a_ia_j+b_ib_j)ff_{ij}-\sum_{i=1}^{m}f_i^2,
\end{eqnarray*}
which gives  the Lemma.
\end{proof}

Now we are ready to prove the following theorem which provides many counter examples  to the generalized Chen's conjecture.
\begin{theorem}\label{39}
For constants $A, B, C, K,\;$ with $A^2+B^2\ne 0$ we use $\Sigma$ to denote the hyperplane in $\r^5=\{(x_1, \cdots, x_4, z)\}$ defined by $A\sum_{i=1}^{4}x_i+Bz+C=0$. Let $M^4= \r^4\setminus{\Sigma}$ and $C^5=\r^5\setminus{\Sigma}$ be the conformally flat space with the metric $h=(A\sum_{i=1}^{4}x_i+Bz+C)^{-2t}(\sum_{i=1}^m{\rm
d}{x_i}^{2}+{\rm d}{z}^{2})$. Then,\\
\indent $1)$ The isometric immersion $\varphi :
M^4\longrightarrow
(C^5, h=(A\sum_{i=1}^{4}x_i+Bz+C)^{-2t}(\sum_{i=1}^m{\rm
d}{x_i}^{2}+{\rm d}{z}^{2}))$ with
$\varphi(x_1, \cdots, x_4)=(x_1,\cdots, x_4, K)$\;is a proper biharmonic
hypersurface for $t=-1$ or $t=\frac{2A^2}{4A^2+B^2};$\\
\indent $2)$ For $A\ne 0$ and any $t$ with $0<t<1$, the conformally flat space
$(C^5, h=(A\sum_{i=1}^{4}x_i+Bz+C)^{-2t}(\sum_{i=1}^m{\rm
d}{x_i}^{2}+{\rm d}{z}^{2}))$ has negative sectional curvature.
\end{theorem}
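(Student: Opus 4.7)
The plan is to treat the two parts separately, both reducing to direct calculation given the tools already developed. For part $(1)$, observe that the hyperplane $\varphi(x_1,\ldots,x_4)=(x_1,\ldots,x_4,K)$ in Euclidean $\r^5$ is totally geodesic, so its shape operator vanishes, its unit normal is $\xi=\partial_z$, and the induced metric $g$ is the standard flat metric on $\r^4$. Setting $u=A\sum_{i=1}^{4}x_i+Bz+C$ and $f=u^t$, the conformal metric is $\bar h=f^{-2}h$. Since $m=4$, the second equation of (\ref{ME}) reduces to $(4-m)(\xi f)\,{\rm grad}_g(\xi f)=0$ and holds automatically, so biharmonicity is equivalent to the first equation of (\ref{ME}) with $|A|_h^2=0$.

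Next I would expand $\Delta_g(f(\xi f))$, ${\rm grad}_g f(f(\xi f))$, $(\xi f)^3$, and $f(\xi f){\rm Hess}_h(f)(\xi,\xi)$ using $f=u^t$. Because $u$ is affine in all coordinates, every partial derivative of $f$ is a constant multiple of a power of $u$, and after factoring out the resulting common $u^{3t-3}$ and further dividing by the overall factor $4tB$ (which must be nonzero, for otherwise $\bar H=\xi f\equiv 0$ by the Claim in the proof of Theorem \ref{MT2} and the hypersurface would be minimal rather than proper biharmonic), the first equation of (\ref{ME}) collapses to the quadratic
\[
(4A^2+B^2)t^2+(2A^2+B^2)t-2A^2=0.
\]
Inspection reveals $t=-1$ as a root, so one may factor as $(t+1)\bigl[(4A^2+B^2)t-2A^2\bigr]=0$, giving exactly the two values of $t$ asserted in the theorem.

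For part $(2)$, I invoke Lemma \ref{37} with the same $f=u^t$ on $C^5$, writing $(u_1,\ldots,u_5)=(A,A,A,A,B)$ so that $\sum_\alpha u_\alpha^2=4A^2+B^2$. Since $u$ is affine, $f_{\alpha\beta}=t(t-1)u^{t-2}u_\alpha u_\beta$, hence $\sum_\alpha f_\alpha^2=t^2u^{2t-2}(4A^2+B^2)$ and the quadratic form $\sum_{\alpha,\beta}(a_\alpha a_\beta+b_\alpha b_\beta)ff_{\alpha\beta}$ equals $t(t-1)u^{2t-2}\bigl[(\sum_\alpha a_\alpha u_\alpha)^2+(\sum_\alpha b_\alpha u_\alpha)^2\bigr]$. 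Substituting into (\ref{38}) yields
\[
K(P)=t(t-1)u^{2t-2}\Bigl[\bigl(\textstyle\sum_\alpha a_\alpha u_\alpha\bigr)^2+\bigl(\sum_\alpha b_\alpha u_\alpha\bigr)^2\Bigr]-t^2u^{2t-2}(4A^2+B^2).
\]
For $0<t<1$ one has $t(t-1)<0$, so the first summand is $\leq 0$ as a sum of squares with a negative coefficient, while the second summand is strictly negative because $A\neq 0$ and $t>0$. Hence $K(P)<0$ at every point and for every $2$-plane section.

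The main obstacle is the bookkeeping in part $(1)$: one must verify that after substituting $f=u^t$ the entire left-hand side of the first equation of (\ref{ME}) collapses to $u^{3t-3}$ times a polynomial in $t$ whose coefficients are independent of the point $(x_1,\ldots,x_4)$. Once that quadratic is isolated, the root $t=-1$ is immediate and the factorisation recovers $t=2A^2/(4A^2+B^2)$. Part $(2)$ is then a direct application of Lemma \ref{37}, powered by the single algebraic observation $\sum_\alpha u_\alpha^2=4A^2+B^2$.
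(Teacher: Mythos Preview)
Your proposal is correct and follows essentially the same route as the paper: for part~(1) the paper substitutes $f=(A\sum_{i=1}^4 x_i+Bz+C)^t$ into equation~(\ref{f32}) of Theorem~\ref{f31} (which is precisely (\ref{ME}) specialised to the totally geodesic hyperplane with $m=4$, so your bypassing Theorem~\ref{f31} in favour of Theorem~\ref{MT2} is cosmetic) and arrives at the identical quadratic $(4A^2+B^2)t^2+(2A^2+B^2)t-2A^2=0$; for part~(2) both arguments plug $f=u^t$ into Lemma~\ref{37} and conclude from $t(t-1)<0$. Your formula in part~(2), tracking $\sum_\alpha u_\alpha^2=4A^2+B^2$, is in fact tidier than the paper's displayed expression, which factors out $A^2$ and writes~$5t^2$ in a way that is only literally correct when $B=A$.
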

\begin{proof}
By Theorem \ref{f31}, we look for the conformal factor $f$ of the form $f(x_1, \cdots, x_4, z)=(A\sum_{i=1}^{4}x_i+Bz+C)^{t}$.
A simple  calculation one checks that for $i, j=1, 2, 3, 4, i\neq j,$

\begin{eqnarray}\label{I}
f_i&=&At(A\sum_{i=1}^{4}x_i+Bz+C)^{t-1},\\\notag f_z&=&Bt(A\sum_{i=1}^{4}x_i+Bz+C)^{t-1},\\\notag
f_{ii}&=&A^2t(t-1)(A\sum_{i=1}^{4}x_i+Bz+C)^{t-2},\\\label{IJ}f_{ij}&=&A^2t(t-1)(A\sum_{i=1}^{4}x_i+Bz+C)^{t-2},\\\notag
f_{iz}&=& A B t(t-1)(A\sum_{i=1}^{4}x_i+Bz+C)^{t-2},\\\notag f_{zz}&=&
B^2t(t-1)(A\sum_{i=1}^{4}x_i+Bz+C)^{t-2},\\\notag
f_{iiz}&=&A^2Bt(t-1)(t-2)(A\sum_{i=1}^{4}x_i+Bz+C)^{t-3}
\end{eqnarray}
Substituting these into the equation (\ref{f32}) we have
$(4A^2+B^2)t^2+(2A^2+B^2)t-2A^2=0$, which has solutions $t=-1$ or
$t=\frac{2A^2}{4A^2+B^2}$. This give the first statement of the theorem.\\

For the second statement, we substitute $f_i, f_{ij}$ in (\ref{I}) and (\ref{IJ}) into the (\ref{38}) to have
\begin{eqnarray*}\notag
K(P)&=&\sum_{i,j=1}^{5}(a_ia_j+b_ib_j)ff_{ij}-\sum_{i=1}^{5}f_i^2\\\notag
&=&A^2\{[(\sum_{i=1}^{5}a_i)^2+(\sum_{i=1}^{5}b_i)^2]t(t-1)-5t^2\}(A\sum_{j=1}^{4}x_j+Bz+C)^{2t-2},
\end{eqnarray*}
which is strictly negative since
$[(\sum_{i=1}^{5}a_i)^2+(\sum_{i=1}^{5}b_i)^2]t(t-1)-5t^2<0$ for
$0<t<1$, and $A^2(A\sum_{j=1}^{4}x_j+Bz+C)^{2t-2}>0$ for
$A\sum_{j=1}^{4}x_j+Bz+C\neq 0$.\\
This completes the proof of the theorem.\\
\end{proof}

To conclude the paper, we would like to point out that by using
Theorem \ref{MT2} to a totally geodesic hypersurface one can very
easily prove the following theorem which was proved in \cite{OT}
using a different method that involves a lengthy computation.
\begin{theorem}\cite{OT}
Let $a_i, i=1, 2, \ldots, m$ and $c$ be constants. Then, the
isometric immersion $\varphi : \mathbb{R}^m\longrightarrow
(\mathbb{R}^{m+1},h=f^{-2}(z)(\sum_{i=1}^m{\rm d}{x_i}^{2}+{\rm
d}{z}^{2})$ with $\varphi(x_1,\ldots, x_m)=(x_1,\ldots,
x_m,\sum_{i=1}^{m}a_ix_i+c)$ is biharmonic  if and only if one of
the following three cases happens
\begin{itemize}
\item[(1)] $f'=0$,
in this case $\varphi$ is minimal (actually, totally geodesic), or
\item[(2)]  $m=4$ and
$f$ is a solution of the equation
\begin{equation}\label{single}
\sum_{i=1}^4a_i^2f^2f'''+(4-\sum_{i=1}^4a_i^2)ff'f''-4(2+\sum_{i=1}^4a_i^2)(f')^3=0,
\end{equation}
or
\item[(3)] $a_i=0,\;i=1,\;\cdots,\;m \;{\rm and}\;
f(z)=\frac{1}{Az+B}$, where  $A$ and $B$ are constants. In this case
each hyperplane is a proper biharmonic hypersurface. This recovers a
result (Theorem 3.1) obtained earlier in \cite{Ou1}.
\end{itemize}
\end{theorem}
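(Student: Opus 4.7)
The plan is to specialize Theorem \ref{MT2} to the affine immersion $\varphi(x_1,\ldots,x_m) = (x_1,\ldots,x_m,\sum_i a_i x_i + c)$. As an affine subspace of $(\r^{m+1},h)$ it is totally geodesic, so its shape operator $A$ in the Euclidean metric vanishes, and the Euclidean unit normal $\xi = k(-a_1,\ldots,-a_m,1)$ with $k = (1+\sum_i a_i^2)^{-1/2}$ is a constant vector field along $M$. The induced metric on $M$ is $g_{ij} = \delta_{ij} + a_i a_j$, with inverse $g^{ij} = \delta_{ij} - k^2 a_i a_j$; this rank-one structure is the only nontrivial piece of linear algebra needed.

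Feeding $|A|_h^2 = 0$ into (\ref{ME}), the second equation collapses to
\[(4-m)(\xi f)\,{\rm grad}_g(\xi f)=0.\]
Since $\xi f = k f'(z)$ pulled back to $M$ gives ${\rm grad}_g(\xi f) = k^3 f''(z) \sum_i a_i \partial_i$ (a short computation using $g^{ij}$), this yields a clean trichotomy: $\xi f \equiv 0$ (equivalent to $f' = 0$, case (1)); or $m=4$ (case (2), pending verification of the first equation); or $m \neq 4$ with $\xi f \neq 0$ and ${\rm grad}_g(\xi f) = 0$. The last alternative forces either all $a_i = 0$ (heading toward case (3)) or $f'' \equiv 0$ on the whole $z$-axis; a glance at the first equation shows the latter then forces $f' \equiv 0$, a contradiction.

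For the first equation of (\ref{ME}), a routine computation using the fact that any $z$-dependent function $\phi$ pulled back to $M$ satisfies $\Delta_g \phi = k^2\alpha^2 \phi''(z)$ (where $\alpha^2 = \sum_i a_i^2$) together with ${\rm Hess}_h(f)(\xi,\xi) = k^2 f''(z)$ and the analogous formulas for $\rm grad_g f$ and $\rm grad_g(f\xi f)$ reduces everything, after dividing by $k^3$, to
\[\alpha^2 f^2 f''' + \bigl[(3-m)\alpha^2 + m\bigr]\, f f' f'' - m(\alpha^2 + 2)(f')^3 = 0.\]
Setting $m=4$ recovers equation (\ref{single}) exactly, confirming case (2). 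Setting all $a_i = 0$ (so $\alpha = 0$) reduces this to $m f'\bigl[ff'' - 2(f')^2\bigr] = 0$; with $f' \neq 0$, the substitution $u = 1/f$ turns $ff'' = 2(f')^2$ into $u'' = 0$, whence $f(z) = (Az+B)^{-1}$, giving case (3).

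The only real obstacle is the bookkeeping for $g^{ij}$, ${\rm grad}_g$, $\Delta_g$, and the Euclidean Hessian of $f$ on $\xi$; but since $g$ is a rank-one perturbation of the identity and $f$ depends on $z$ alone, every quantity collapses to a closed-form expression in $k$, $\alpha$, and derivatives of $f$. Once this is assembled, Theorem \ref{MT2} turns what required a lengthy separate derivation in \cite{OT} into essentially one page of substitution.
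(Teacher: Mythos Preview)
Your proposal is correct and is precisely the approach the paper indicates: the paper does not actually write out a proof of this theorem but only remarks that applying Theorem~\ref{MT2} to the totally geodesic affine hyperplane gives it easily, and your argument is a faithful execution of that outline. The case analysis from the second equation of (\ref{ME}), the reduction of the first equation to $\alpha^2 f^2 f''' + [(3-m)\alpha^2 + m]\, f f' f'' - m(\alpha^2+2)(f')^3 = 0$, and the specializations to $m=4$ and to $a_i\equiv 0$ all check out.
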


\end{document}